\title[Uniformization of polynomial dynamics]{Arboreal Galois representations and  uniformization of polynomial dynamics}
\author{Patrick Ingram}
\address{Department of Mathematics, Colorado State University}
\email{pingram@math.colostate.edu}
\date{\today}
\newcommand{\QQ}{\mathbb{Q}}
\newcommand{\ZZ}{\mathbb{Z}}
\newcommand{\CC}{\mathbb{C}}
\newcommand{\RR}{\mathbb{R}}
\newcommand{\PP}{\mathbb{P}}
\renewcommand{\AA}{\mathbb{A}}
\newcommand{\basin}{\mathcal{B}}
\newcommand{\Gal}{\operatorname{Gal}}
\newcommand{\GL}{\operatorname{GL}}
\newcommand{\Aut}{\operatorname{Aut}}
\newcommand{\ord}{\operatorname{ord}}
\newcommand{\MOD}[1]{~(\textup{mod}~#1)}
\newcommand{\pf}{\mathfrak{p}}
\newcommand{\mf}{\mathfrak{m}}
\renewcommand{\epsilon}{\varepsilon}
\newcommand{\llbracket}{[\hspace{-1.5pt}[}
\newcommand{\rrbracket}{]\hspace{-1.5pt}]}
\newtheorem{theorem}{Theorem}
\newtheorem{lemma}[theorem]{Lemma}
\newtheorem{corollary}[theorem]{Corollary}
\theoremstyle{definition}
\begin{document}
\begin{abstract}
Given a polynomial $f$ defined over a complete local field, we construct a biholomorphic change of variables defined in a neighbourhood of infinity which transforms the action $z\mapsto f(z)$ to the multiplicative action $z\mapsto z^{\deg(f)}$.  The relation between this construction and the B\"{o}ttcher coordinate in complex polynomial dynamics is similar to the relation between the complex uniformization of elliptic curves, and Tate's $p$-adic uniformization.   Specifically,  this biholomorphism is Galois equivariant, reducing certain questions about the Galois theory of preimages by $f$ to questions about multiplicative Kummer theory.
\end{abstract}

\maketitle

A well known result of Tate asserts that if $K$ is a complete local field, and $E/K$ is an elliptic curve with split multiplicative reduction, then there is a $q\in K^*$ and a short exact sequence
\[0\longrightarrow q^{\ZZ}\longrightarrow \overline{K}^*\longrightarrow E(\overline{K})\longrightarrow 0,\]
defined by power series.  The action of the absolute Galois  group commutes with the map $\overline{K}^*\to E(\overline{K})$, and  one can use this uniformization shed some light on the $\ell$-adic Galois representation \[\rho_\ell:\Gal(\overline{K}/K)\to  \Aut(T_\ell(E))\cong\GL_2(\ZZ_\ell),\]
which takes an element of the Galois group to its action on the Tate module $T_\ell(E)$.  In particular, given that $E/K$ has split multiplicative reduction, one can show that the image of the Galois representation is reasonably large (see \cite[\S V.6]{ataec}), a first step in the direction of Serre's celebrated open-image theorem.  
The main idea is that the field obtained by adjoining to $K$ the $n$-torsion of $E$ is simply the field $K(q^{1/n}, \zeta)$, where $\zeta$ is a primitive $n$th root of unity; the uniformization turns the problem into one of (multiplicative) Kummer theory, which is well understood.

The purpose of this note is to present a similar uniformization for polynomial dynamical systems over local fields, and to explore the consequences for the action of Galois on preimages of a point. 
Boston and Jones \cite{bostonjones} discuss the action of Galois on the iterated preimages of a point under a polynomial dynamical system in terms of an arboreal Galois representation.  In particular, if $T$ is the infinite, rooted $d$-ary tree associated to preimages of $P$ under a polynomial $f(z)$ of degree $d$, over a field $K$ of characteristic zero, then there is a natural homomorphism
\[\rho_{f, P}:\Gal(\overline{K}/K)\to \Aut(T),\]
called the \emph{arboreal Galois representation} associated to $f(z)$ and $P$.  This representation is analogous to the Galois representation associated to the $\ell$-adic Tate module of an  elliptic curve alluded to above, and our first result shows that one may similarly demonstrate that the images of certain arboreal Galois representations over finite extensions of $\QQ_p$ are large, in the sense of having finite index in a certain type of subgroup of $\Aut(T)$.  

By a \emph{consistent labelling} of $T$ we mean a labelling of the nodes $\alpha_k$ on the $N$th level of $T$ by elements $k\in\ZZ/d^N\ZZ$, for each $N$, in such a way that the node below $\alpha_k$ is labeled by the image of $k$ under the natural map $\ZZ/d^N\ZZ\to \ZZ/d^{N-1}\ZZ$.   By a \emph{Kummer subgroup} of $\Aut(T)$, we mean a subgroup $G$ for which there is a consistent labelling of $T$ such that the restriction of $G$ to the nodes at level $N$ is isomorphic to $\left(\ZZ/d^N\ZZ\right)\rtimes \left(\ZZ/d^N\ZZ\right)^\times$, acting by $(i, j)\alpha_k=\alpha_{jk+i}$.  In other words, a Kummer subgroup of $\Aut(T)$ is one that acts as one might expect Galois to act on the tree of iterated $d$th roots of an element of $K$.

 \begin{theorem}\label{th:finite index}
If $K/\QQ_p$ is a finite extension, if $f(z)\in K[z]$ is a monic polynomial with good reduction and degree not divisible by $p$, and if $P\in K$ is not in the filled Julia set of $f$, then the image of the arboreal Galois representation $\rho_{f, P}$ has finite index in a Kummer subgroup of $\Aut(T)$.  
\end{theorem}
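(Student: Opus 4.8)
The plan is to use the uniformizing biholomorphism of the abstract to convert the statement into classical Kummer theory of $\mathbb{G}_{m}$ over a local field. Write $d=\deg f$. The first step is to produce this map. Since $f$ is monic with good reduction, $f(z)=z^{d}+c_{d-1}z^{d-1}+\cdots+c_{0}$ with each $c_{i}\in\Ocal_K$, so the closed unit disc is forward invariant, while $|z|>1$ forces $|f(z)|=|z|^{d}>|z|$ and hence escape to infinity. Thus the filled Julia set of $f$ is exactly the closed unit disc, and the hypothesis on $P$ simply says $|P|>1$. I would then solve the B\"ottcher equation $\varphi(f(z))=\varphi(z)^{d}$ for a series $\varphi(z)=z\bigl(1+a_{1}z^{-1}+a_{2}z^{-2}+\cdots\bigr)$ coefficient by coefficient; at the $i$th stage one divides by $d$, legitimate since $p\nmid d$, and an easy induction using good reduction shows every $a_{i}\in\Ocal_K$. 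The series then converges on $\{|z|>1\}$, where $|\varphi(z)|=|z|$, and defines there a bijection onto $\{|w|>1\}$ conjugating $f$ to $w\mapsto w^{d}$; it has a compositional inverse of the same shape, and being given by a power series with coefficients in $K$ it commutes with $\Gal(\overline{K}/K)$ on its domain.

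Next I would translate the tree $T$. Put $\beta=\varphi(P)\in K^{\times}$; then $|\beta|=|P|>1$, so $\beta$ has nonzero (negative) valuation. If $\alpha\in f^{-N}(P)$ then $|\alpha|>1$ (else $|f^{N}(\alpha)|\le1$), so $\varphi(\alpha)$ is defined and $\varphi(\alpha)^{d^{N}}=\varphi(f^{N}(\alpha))=\beta$; conversely every $d^{N}$th root $w$ of $\beta$ has $|w|>1$, so $\varphi^{-1}(w)\in f^{-N}(P)$. Hence for each $N$ the map $\varphi$ gives a $\Gal(\overline{K}/K)$-equivariant bijection between the $N$th level of $T$ and the set of $d^{N}$th roots of $\beta$ (there are $d^{N}$ of each, since the critical points of $f$ lie in the closed unit disc and so $f^{N}(z)-P$ is separable). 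Fixing compatible systems $(\zeta_{d^{N}})$ of primitive $d^{N}$th roots of unity and $(\beta^{1/d^{N}})$ of $d^{N}$th roots of $\beta$, and labelling $\alpha\in f^{-N}(P)$ by the residue $k$ with $\varphi(\alpha)=\zeta_{d^{N}}^{\,k}\beta^{1/d^{N}}$, gives a consistent labelling of $T$; under it $\sigma\in\Gal(\overline{K}/K)$ sends the node labelled $k$ to the one labelled $c(\sigma)k+a(\sigma)$, where $c$ is the $d$-adic cyclotomic character and $\sigma(\beta^{1/d^{N}})=\zeta_{d^{N}}^{\,a(\sigma)}\beta^{1/d^{N}}$. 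So $\im\rho_{f,P}$ is contained in the Kummer subgroup $G\le\Aut(T)$ attached to this labelling, and is identified with $\Gal(K_{\infty}/K)$ for $K_{\infty}:=K(\zeta_{d^{\infty}},\beta^{1/d^{\infty}})$.

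It then remains to check $[G:\im\rho_{f,P}]<\infty$. Writing $\ZZ_{d}:=\varprojlim_{N}\ZZ/d^{N}\ZZ$, one has $1\to\ZZ_{d}\to G\to\ZZ_{d}^{\times}\to1$, and $\Gal(K_{\infty}/K)$ sits compatibly in $1\to\Gal(K_{\infty}/K(\zeta_{d^{\infty}}))\to\Gal(K_{\infty}/K)\to\Gal(K(\zeta_{d^{\infty}})/K)\to1$, so it suffices to bound the two outer indices. Since $p\nmid d$, the extension $K(\zeta_{d^{\infty}})/K$ is unramified, hence procyclic, generated by a Frobenius whose image in $\ZZ_{d}^{\times}=\prod_{q\mid d}\ZZ_{q}^{\times}$ is a power of the integer $p$; since $p$ is not a root of unity, an elementary computation in each $\ZZ_{q}^{\times}$ shows that this element generates a closed subgroup of finite index. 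On the Kummer side, the unramified extension $K(\zeta_{d^{\infty}})/K$ leaves the value group unchanged, so the nonzero value $v(\beta)$ is divisible by only a bounded power of $d$; hence $\beta$ admits only boundedly many $d$-power roots in $K(\zeta_{d^{\infty}})^{\times}$, and standard Kummer theory gives $[\ZZ_{d}:\Gal(K_{\infty}/K(\zeta_{d^{\infty}}))]<\infty$. Combining the two bounds yields $[G:\im\rho_{f,P}]<\infty$.

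The main obstacle is the first step: producing the B\"ottcher coordinate with integral coefficients converging on the whole of $\{|z|>1\}$, which is exactly where the two hypotheses (good reduction and $p\nmid d$) are used and from which the Galois equivariance is immediate. After that the argument is a translation into local Kummer theory whose only delicate point is that $P$ lies outside the filled Julia set precisely when $v(\beta)\ne0$, which is what keeps the Kummer extension large.
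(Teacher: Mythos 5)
Your proposal is correct and follows essentially the same route as the paper: use the non-archimedean B\"ottcher coordinate (the paper's Theorem~\ref{th:main}) to transport the preimage tree of $P$ Galois-equivariantly onto the tree of $d$-power roots of $\beta=\varphi(P)$, note that $\beta$ is not a unit because $P$ escapes the filled Julia set, and conclude by Kummer theory. The only differences are expository: you rebuild the coordinate by a coefficient recursion rather than invoking the paper's limit construction, and you spell out the cyclotomic and Kummer finite-index computations that the paper compresses into the phrase ``standard Kummer theory.''
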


We recall here that the \emph{filled Julia set} of $f$ is simply the set of elements whose forward orbit under $f$ is a bounded subset of $K$.

 The construction which uniformizes the polynomial dynamical system is the $p$-adic analogue to the B\"{o}ttcher coordinate used in complex dynamics (see also a similar construction used by Ghioca and Tucker \cite{ghioca-tucker}).  Throughout, we restrict attention to monic polynomials; it is not difficult to generalize these results to arbitrary polynomials, although in the interest of simplicity, we have avoided doing so here.  Buff, Epstein, and Koch \cite{bek} have extended the notion of a B\"{o}ttcher coordinate to higher-dimensional dynamics; while we have not pursued this here, it is possible that a similar construction over local fields would allow one to say something about the action of Galois on preimages in that context.
 
 Although Theorem~\ref{th:finite index} applies, as stated, only to finite extensions of $\QQ_p$, the general construction is of interest in other settings.
\begin{theorem}\label{th:main}
Let $K$ be a field complete with respect to a non-trivial non-archimedean valuation, let $f(z)\in K[z]$ be a monic polynomial, and suppose that the degree of $f$ is not divisible by the residual characteristic of $K$.  Then  there exists a disk $\basin\subseteq\PP^1(\overline{K})$ about $\infty$, and a power series $\Omega\in K\llbracket z^{-1}\rrbracket$ convergent on $\basin$, such that $\Omega(f(z))=(\Omega(z))^d$ for all $z\in\basin$.  Furthermore, the map $\Omega$ is Galois-equivariant, in the sense that for any $z\in\basin$ and $\sigma\in\Gal(\overline{K}/K)$,  we have $\Omega(z^\sigma)=\Omega(z)^\sigma$.
\end{theorem}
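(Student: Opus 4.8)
The plan is to adapt the classical construction of the B\"ottcher coordinate near a superattracting fixed point, exploiting two features of the non-archimedean setting. Writing $d = \deg f$ and letting $p$ be the residual characteristic, the hypothesis $p \nmid d$ makes $d$, and hence every $d^n$, a unit of the valuation ring $\Ocal_K$, so that $d^n$-th roots of elements close to $1$ exist and are unique; and the ultrametric inequality makes the relevant infinite product converge almost for free, in contrast with the delicate estimates required in the archimedean case. (If $d \le 1$ one may take $\Omega$ to be a constant, so assume $d \ge 2$.) I will produce the inverse B\"ottcher map as $\psi(z) = z\prod_{n=0}^{\infty}\bigl(f^{n+1}(z)/f^n(z)^d\bigr)^{1/d^{n+1}}$ --- equivalently, $\psi(z) = \lim_{N\to\infty} f^{N}(z)^{1/d^N}$ with $d^N$-th roots normalized to be $\sim z$ --- and set $\Omega = 1/\psi$.

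\emph{Formal solution.} Writing $f(z) = z^d + a_{d-1}z^{d-1}+\cdots+a_0$ and seeking $\psi(z) = z\,u(z)$ with $u(z) = 1 + c_1 z^{-1} + c_2 z^{-2} + \cdots \in K\llbracket z^{-1}\rrbracket$, the identity $\psi(f(z)) = \psi(z)^d$ becomes $f(z)\,u(f(z)) = z^d\,u(z)^d$. Since $f(z)^{-1} = z^{-d}(1+\cdots)$, comparing coefficients of $z^{d-k}$ yields, for each $k \ge 1$, a relation $d\,c_k = \Phi_k$, where $\Phi_k$ is a polynomial with integer coefficients in the $a_i$ and in $c_1,\dots,c_{k-1}$; as $d \in K^\times$ this determines $c_k \in K$ uniquely. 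Thus there is a unique formal $\psi \in z + K\llbracket z^{-1}\rrbracket$, and $\Omega := 1/\psi$ lies in $K\llbracket z^{-1}\rrbracket$ with vanishing constant term and satisfies $\Omega(f(z)) = \Omega(z)^d$ formally.

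\emph{Convergence and the functional equation.} Fix $\rho$ large (I impose finitely many explicit lower bounds on $\rho$ as I go) and set $\basin = \{z \in \PP^1(\overline K) : |z| > \rho\}$. For $\rho$ large enough, $|f(w)| = |w|^d > \rho$ for all $w \in \basin$, so $f(\basin) \subseteq \basin$ and $|f^n(z)| = |z|^{d^n}$ for all $n \ge 0$, $z\in\basin$. Then $h_n(z) := f^{n+1}(z)/f^n(z)^d = 1 + a_{d-1}f^n(z)^{-1} + \cdots + a_0 f^n(z)^{-d}$ satisfies $|h_n(z) - 1| \le (\max_i|a_i|)\,|z|^{-d^n}$, which is $< 1$ on $\basin$ and tends to $0$ rapidly with $n$. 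Since $1/d^{n+1} \in \Ocal_K$, Hensel's lemma applied to $T^{d^{n+1}} - h_n(z)$ at $T = 1$ (where the value has absolute value $< 1 = |d^{n+1}|^2$) produces a unique $h_n(z)^{1/d^{n+1}} \in 1 + z^{-1}K\llbracket z^{-1}\rrbracket$, analytic on $\basin$, with $|h_n(z)^{1/d^{n+1}} - 1| = |h_n(z)-1|$; equivalently one may use the binomial series, whose coefficients are bounded in absolute value by $1/|k!|$ and which therefore converges on $\basin$ for $\rho$ large. The product $\psi(z) = z\prod_{n\ge0} h_n(z)^{1/d^{n+1}}$ then converges uniformly on $\basin$, by the ultrametric inequality, to an analytic function $z\,v(z)$ with $v(z)\in 1+z^{-1}K\llbracket z^{-1}\rrbracket$; a telescoping identity for the partial products --- namely $z\prod_{n=0}^{N-1} h_n(z)^{1/d^{n+1}} = f^N(z)^{1/d^N}$, whence $\psi(f(z)) = \lim_N f^{N+1}(z)^{1/d^N} = \psi(z)^d$ --- shows $\psi$ solves the functional equation, so by uniqueness it equals the formal series above. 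Since $\psi$ is nonvanishing on $\basin$, $\Omega = 1/\psi$ is the required power series in $z^{-1}$, convergent on $\basin$ and satisfying $\Omega(f(z)) = \Omega(z)^d$ there.

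\emph{Galois-equivariance, and the main difficulty.} Every $\sigma \in \Gal(\overline K/K)$ preserves the unique extension of the valuation to $\overline K$, hence is an isometry and commutes with convergent limits; since the coefficients of $\Omega$ lie in $K$ and $z \in \basin$ implies $z^\sigma \in \basin$ (as $|z^\sigma| = |z|$), we get $\Omega(z^\sigma) = \bigl(\sum_k c_k z^{-k}\bigr)^\sigma = \Omega(z)^\sigma$. I expect the only genuine obstacle to be the clean handling of the $d^{n+1}$-th roots --- choosing them consistently (the normalization $\equiv 1$, together with Hensel uniqueness, does this) and checking that they are all analytic and small on a single disk about $\infty$; once that is arranged, the ultrametric inequality reduces the remaining convergence to a triviality.
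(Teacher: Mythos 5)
Your argument is correct and establishes the theorem as stated, but it takes a genuinely different route from the paper at the decisive step, namely convergence. You transplant the classical complex B\"ottcher construction: fix $\rho$ large, check $|f^n(z)|=|z|^{d^n}$ on $\basin=\{|z|>\rho\}$, extract normalized $d^{n+1}$-st roots of $h_n=f^{n+1}/(f^n)^d$ by Hensel's lemma (or the binomial series), and let the ultrametric inequality force the infinite product to converge, the telescoping identity giving the functional equation. The paper instead first builds the series as the $\mf$-adic limit of the normalized roots $(f^N(z))^{-1/d^N}$ inside $K\llbracket z^{-1}\rrbracket$ (your coefficient-comparison formal solution is an equivalent substitute), and then proves convergence by an integrality argument: setting $C_f=\max\{1,|a_i|^{1/(d-i)}\}$, when $C_f=1$ the coefficients of $\Omega$ are integral (this is where $d$ being a unit enters), and the general case follows by rescaling $z\mapsto\alpha z$ with $|\alpha|=C_f$; it also handles the compositional inverse via Lagrange inversion. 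What the paper's route buys is the exact radius of convergence $C_f^{-1}$: for good reduction $\basin$ is all of $\{|z|>1\}$, i.e.\ the complement of the filled Julia set, which is precisely what Theorem~\ref{th:finite index} and the corollaries require, whereas your ``take $\rho$ large'' disk suffices for the bare statement but is in general smaller (and the binomial-series variant genuinely forces a smaller disk in residual characteristic $p$, since it needs $|h_n-1|<|p|^{1/(p-1)}$; the Hensel-in-the-Tate-algebra variant avoids this). Two small points you should tighten: in passing from the partial-product identity to $\psi(f(z))=\psi(z)^d$, the two $d^N$-th roots being compared a priori differ by a $d^N$-th root of unity, and you should say explicitly that such a root of unity congruent to $1$ modulo the maximal ideal must equal $1$ because its order is prime to the residual characteristic; the Galois-equivariance argument itself is the same as the paper's.
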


We note that the Galois-equivariance is true for the complex B\"{o}ttcher coordinate as well, but is markedly less interesting, since the only finite extension of $\RR$ is $\CC$.

Recall that a monic polynomial $f(z)=\sum_{i=0}^d a_iz^i\in K[z]$ has good reduction just in case $|a_i|\leq 1$ for all $i$.
It turns out that if $f$ has good reduction, then the set $\basin$ from Theorem~\ref{th:main} is simply the complement of the filled Julia set of $f$, i.e., $\basin$ is the set of $z\in\PP^1(\overline{K})$ such that $f^N(z)\to\infty$ as $N\to\infty$.
This is what allows us, in Theorem~\ref{th:main}, to describe the action of the absolute Galois group on preimages of certain points by polynomial maps with good reduction.  We note that the following consequence for number fields follows immediately.

\begin{corollary}\label{cor:big galois}
Let $K$ be a number field, let $f(z)\in K[z]$ be monic, and suppose that the sequence $P_n\in\overline{K}$ satisfies $f(P_{n+1})=P_n$ for all $n$.
If there exists a prime $\pf$ of $K$, above a rational prime $p\nmid\deg(f)$, 
such that $f$ has good reduction at $\pf$, and $P_0$ is not in the $\pf$-adic filled Julia set, then for all sufficiently large $n$, we have
$[K(P_{n+1}):K(P_n)]=d$.
\end{corollary}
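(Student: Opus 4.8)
The plan is to pass to the completion $K_{\pf}$, use Theorem~\ref{th:main} to convert the tower $(P_n)$ into a tower of iterated $d$th roots, and then read the degrees off a one-line valuation estimate.

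First I would fix an embedding $\overline{K}\hookrightarrow\overline{K_{\pf}}$ extending $K\hookrightarrow K_{\pf}$. This induces on each field $K(P_n)$ a place whose completion is $L_n:=K_{\pf}(P_n)\subseteq\overline{K_{\pf}}$, and $L_n\subseteq L_{n+1}=L_n(P_{n+1})$ since $P_n=f(P_{n+1})$. As the degree of a completion is bounded by the global degree, $[K(P_{n+1}):K(P_n)]\geq[L_{n+1}:L_n]$, while $[K(P_{n+1}):K(P_n)]\leq d$ because $P_{n+1}$ is a root of $f(X)-P_n$; so it suffices to show $[L_{n+1}:L_n]=d$ for all sufficiently large $n$. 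Since $f$ has good reduction at $\pf$ and $p\nmid d$, Theorem~\ref{th:main} furnishes $\Omega\in K_{\pf}\llbracket z^{-1}\rrbracket$ with $\Omega(f(z))=\Omega(z)^d$, convergent on $\basin$; as recalled after the theorem, $\basin$ is the complement of the filled Julia set, which for a monic $f$ of good reduction is $\{z:|z|>1\}\cup\{\infty\}$, and one checks from the construction of $\Omega$ that there $\Omega$ has $\Ocal_{\pf}$-integral coefficients and $|\Omega(z)|=|z|^{-1}$. Since $f$ maps $\{|z|\leq1\}$ into itself while $|P_0|>1$, an easy induction gives $|P_n|>1$ for all $n$, i.e. $P_n\in\basin$. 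Hence $\beta_n:=\Omega(P_n)$ is defined, lies in $L_n$, has $|\beta_n|=|P_n|^{-1}<1$, and — using $\Omega(f(z))=\Omega(z)^d$ with $f(P_{n+1})=P_n$ — satisfies $\beta_{n+1}^d=\beta_n$.

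The core of the argument is then immediate. Let $w_n$ be the value of $\beta_n$ under the normalized ($\ZZ$-valued) valuation $v_{L_n}$ of $L_n$; since $|\beta_n|<1$ we have $w_n\geq1$. Put $e_n=e(L_{n+1}/L_n)$, so that $v_{L_{n+1}}$ restricts to $e_n v_{L_n}$ on $L_n$ and $e_n\leq[L_{n+1}:L_n]\leq d$. Applying $v_{L_{n+1}}$ to $\beta_{n+1}^d=\beta_n$ gives $d\,w_{n+1}=e_n w_n$, hence $w_{n+1}=(e_n/d)w_n\leq w_n$. Thus $(w_n)_n$ is a non-increasing sequence of positive integers, so it is eventually constant; and once $w_{n+1}=w_n$, with $w_n\neq0$, we must have $e_n=d$, whence $[L_{n+1}:L_n]\geq e_n=d$ and therefore $[L_{n+1}:L_n]=d$. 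Combined with the reductions of the first paragraph, this yields $[K(P_{n+1}):K(P_n)]=d$ for all large $n$.

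I do not expect a genuine obstacle — this is the ``immediate'' consequence the text promises — but two points want care. First, $P_n$ need not lie in $K$, so all valuation bookkeeping must be carried out in the tower $L_n=K_{\pf}(P_n)$; note that $\Omega$ having coefficients in $K_{\pf}$ already places $\beta_n$ in $L_n$, so no injectivity statement about $\Omega$ is needed. Second, the hypothesis that $P_0$ lies outside the filled Julia set is used exactly to guarantee $w_n\geq1$ — equivalently $\beta_n\neq0$ with $|\beta_n|<1$ — which is what forces the non-increasing sequence $(w_n)$ to stabilize at a positive value rather than at $0$; without this, the passage from $w_{n+1}=w_n$ to $e_n=d$ breaks down.
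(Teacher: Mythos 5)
Your proof is correct and follows essentially the same route as the paper: pass to the completion at $\pf$, apply the uniformization $\Omega$ of Theorem~\ref{th:main} to turn the tower into iterated $d$th roots of the non-unit $\Omega(P_0)$, and conclude that the ramification index of $K_\pf(P_{n+1})/K_\pf(P_n)$ is eventually $d$. Your write-up merely makes explicit the valuation bookkeeping (the non-increasing positive integer sequence $w_n$) that the paper leaves implicit, and it sidesteps the paper's identification $K_\pf(P_n)=K_\pf(\Omega(P_n))$ by only using $\Omega(P_n)\in K_\pf(P_n)$, which is a harmless simplification.
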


Of course, the result above also applies when $K$ is the function field of a curve over an algebraically closed field of characteristic 0, a fact which we shall exploit below.

Theorem~\ref{th:main} offers us some insight into the action of Galois on preimages of certain points, especially for polynomials with good reduction.  There are several potential applications of this, and we point out two.
First of all, Theorem~\ref{th:main} gives a sufficient condition for a conjecture of Sookdeo on integer points in backwards orbits.  We remind the reader that if $S$ is a set of primes of a number field $K$, then a point $Q\in\PP^1(\overline{K})$ is \emph{$S$-integral} with respect to $P\in\PP^1(K)$ if there is no prime $\pf$ of $K(Q)$ over a prime in $S$, such that the images of $P$ and $Q$ modulo $\pf$ coincide.  Sookdeo conjectures \cite[Conjecture~1.2]{sookdeo} that if $Q$ is not preperiodic for $f\in K(z)$, then any $P\in \PP^1(K)$ has at most finitely many preimages in $\PP^1(\overline{K})$ that are integral with respect to $Q$.
\begin{corollary}\label{cor:sookdeo}
Let $K$ be a number field, and suppose that $f(z)\in K[z]$ is monic, and has at least one place of good reduction lying above a prime not dividing $\deg(f)$ at which $P\in K$ is not in the local filled Julia set.  Then Sookdeo's backward orbit conjecture holds for $f$ and $P$.
\end{corollary}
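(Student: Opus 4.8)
The plan is to argue by contradiction, playing the uniformization at the good place $\pf$ against the arithmetic equidistribution of points of small canonical height. Let $\h_f$ denote the canonical height of $f$, so $\h_f(f(z))=d\,\h_f(z)$, and suppose the backward orbit $\bigcup_{n\geq 0}f^{-n}(P)$ contained infinitely many points that are $S$-integral with respect to $Q$. As each level $f^{-n}(P)$ is finite, we could then choose distinct such points $R_k\in f^{-n_k}(P)$ with $n_k\to\infty$; so $\h_f(R_k)=\h_f(P)/d^{n_k}\to 0$, while $\h_f(Q)>0$ since $Q$ is not preperiodic. Fix a place $\pf$ of good reduction above a rational prime $p\nmid d$ at which $P$ lies outside the filled Julia set. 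By Theorem~\ref{th:main}, together with the identification (valid under good reduction) of $\basin_\pf$ with the complement of the filled Julia set — a set that is backward invariant — every $R_k$ lies in $\basin_\pf$, whence $\Omega(R_k)$ is a $d^{n_k}$-th root of $\Omega(P)$. Since $\Omega$ has integral coefficients and $\Omega(z)/z\to 1$, the map $\Omega$ is a $\pf$-adic isometry on $\basin_\pf$ and $|R_k|=|P|_\pf^{1/d^{n_k}}$ at every place of $K(R_k)$ above $\pf$; as $p\nmid d$, the $d^{n_k}$-th roots of unity remain pairwise incongruent modulo $\pf$, so the $d^{n_k}$ points of $f^{-n_k}(P)$ are pairwise $\pf$-adically separated by $|P|_\pf^{1/d^{n_k}}$. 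In particular $[K(R_k):K]\geq d^{n_k}/m_0$, where $m_0=-v_\pf(P)$ (a uniform sharpening of Corollary~\ref{cor:big galois}).

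Next I would decompose the canonical height relative to $Q$. Writing $\hat\lambda_{f,v}(\cdot,Q)$ for the canonical local height attached to the divisor $[Q]$, normalized so that $\int\hat\lambda_{f,v}(\cdot,Q)\,d\mu_{f,v}=0$ at every place $v$, one has the global identity
\[\h_f(R)+\h_f(Q)=\sum_{w}n_w\,\hat\lambda_{f,w}(R,Q),\]
the sum running over the places $w$ of $K(R)$ with the usual local weights $n_w$. Since enlarging $S$ only enlarges the set of $S$-integral points, we may assume $S$ contains every archimedean place and every place of bad reduction of $f$; then the $S$-integrality of $R_k$ with respect to $Q$ forces $\hat\lambda_{f,w}(R_k,Q)=0$ for each $w$ not above $S$, so that $\sum_{w\mid S}n_w\,\hat\lambda_{f,w}(R_k,Q)=\h_f(R_k)+\h_f(Q)\to\h_f(Q)>0$.

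On the other hand, because $\h_f(R_k)\to 0$ and the $R_k$ are distinct, the equidistribution theorem for points of small height (Baker--Rumely, Chambert-Loir, Favre--Rivera-Letelier, Yuan) shows that at every place $v$ the Galois orbit of $R_k$ equidistributes toward the canonical measure $\mu_{f,v}$; combining this with the normalization of $\hat\lambda_{f,v}$ one would obtain
\[\sum_{w\mid v}n_w\,\hat\lambda_{f,w}(R_k,Q)\longrightarrow\int\hat\lambda_{f,v}(\cdot,Q)\,d\mu_{f,v}=0\]
for each $v$. Summing over the finite set $S$ then contradicts the previous paragraph, and the corollary follows.

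The one genuinely delicate step — and the place where the hypothesis on $\pf$ is essential — is the convergence in the last display: $\hat\lambda_{f,v}(\cdot,Q)$ has a logarithmic pole at $Q$, so weak equidistribution of the conjugates of $R_k$ does not immediately control $\sum_{w\mid v}n_w\hat\lambda_{f,w}(R_k,Q)$; one must rule out that these conjugates cluster too tightly near $Q$. At $v=\pf$ this is exactly what the uniformization delivers: in the coordinate $\Omega$ the conjugates of $R_k$ are roots of $\Omega(P)$, kept $\pf$-adically spread out by $p\nmid d$, so $\mu_{f,\pf}$ (the Gauss-point mass) assigns no mass near the classical point $Q$ and the estimate is immediate; at each of the finitely many remaining places $v$ at which $Q$ lies in the $v$-adic Julia set one must still bound the clustering, using the smallness of $\h_f(R_k)$ together with the lower bound $[K(R_k):K]\geq d^{n_k}/m_0$, which keeps each Galois orbit a subset of bounded index in the full preimage level. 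I expect this no-concentration estimate to be the principal technical obstacle. Finally, it is worth noting the special case that trivializes the argument when it arises: if $\pf\notin S$ and $Q$ itself escapes to $\infty$ $\pf$-adically, then $R_k$ and $Q$ both reduce to $\infty$ modulo $\pf$, so no preimage of $P$ is $S$-integral with respect to $Q$ and there is nothing to prove.
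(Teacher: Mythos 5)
There is a genuine gap, and you have in fact flagged it yourself: the ``no-concentration'' estimate in your last paragraph is not a technical loose end to be tidied up later --- it is the entire analytic content of the statement. Weak equidistribution of the Galois orbits of the $R_k$ against $\mu_{f,v}$ only controls integrals of \emph{continuous} test functions, while $\hat\lambda_{f,v}(\cdot,Q)$ has a logarithmic pole at $Q$; the convergence $\sum_{w\mid v}n_w\hat\lambda_{f,w}(R_k,Q)\to 0$ at the finitely many places where $Q$ sits in (or near) the support of $\mu_{f,v}$ is precisely what must be proved, and your sketch (``use the smallness of $\h_f(R_k)$ together with $[K(R_k):K]\geq d^{n_k}/m_0$'') is not carried out. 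Until that estimate is supplied, the contradiction in your argument does not materialize, so the proposal as written does not prove the corollary. (There are also smaller points to check --- the exact normalization in your global identity $\h_f(R)+\h_f(Q)=\sum_w n_w\hat\lambda_{f,w}(R,Q)$, and the passage from $S$-integrality to vanishing of the canonical, rather than naive, local heights at places outside $S$ --- but these are secondary.)

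The paper's proof is much shorter and sidesteps all of this: Sookdeo's theorem is already \emph{conditional} on a dynamical Lehmer-type bound for the iterated preimages of $P$, i.e.\ on a lower bound of the shape $[K(Q):K]\gg d^N$ for solutions of $f^N(Q)=P$, and that is exactly what the uniformization supplies. Under the hypotheses of the corollary, Corollary~\ref{cor:big galois} (equivalently, the ramification/Kummer-theoretic argument you give in your first paragraph, which yields $[K(R_k):K]\geq d^{n_k}/m_0$ with $m_0=-v_\pf(P)$) gives this degree growth, and one then simply cites \cite{sookdeo}. In other words, the part of your write-up that is correct and complete --- the local analysis at $\pf$ via $\Omega$ --- is the only new input needed; the equidistribution machinery you attempt to rebuild is already packaged in Sookdeo's conditional result, and rebuilding it is where your argument stalls. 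If you want a self-contained proof along your lines, you would essentially have to reproduce Sookdeo's handling of the logarithmic singularity, using the degree bound to limit how much mass a Galois orbit can place in a small disk around $Q$.
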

Indeed, we can prove something stronger.  Sookdeo proves his conjecture under the hypothesis that a \emph{dynamical Lehmer Conjecture} holds for the preimages of $P$, that is, given that
\[\hat{h}_f(Q)\geq \frac{\epsilon}{[K(Q):K]}\]
for all iterated preimages $Q$ of $P$, where $\epsilon>0$ is absolute (although may depend on $f$ and $P$) and $\hat{h}_f$ is the canonical height associated to $f$.  This is equivalent to obtaining a lower bound of the form $[K(Q):K]\gg d^{N}$, for solutions to $f^N(Q)=P$, and by Corollary~\ref{cor:big galois} such a bound holds under the hypotheses of Corollary~\ref{cor:sookdeo}.

Another application concerns preimages in parametrized families of polynomial dynamical systems.
Faber, Hutz, the author, Jones, Manes, Tucker, and Zieve \cite{7auth} considered the problem of how many rational preimages a given rational $a\in\QQ$ could have under quadratic polynomials $f(z)=z^2+c$, as $c\in\QQ$ varies, and obtained a uniform bound for most $a$. Further progress on this problem was recently reported by  Levin \cite{aaron}.  The argument, which in fact proved something much stronger, centred on a geometric analysis of the \emph{$N$th preimage curve}, defined by the polynomial equation $f^N(z)=a$, in the two variables $z$ and $c$.  Given a number field $K$, and a curve $C/K$, let $f(z)\in K(C)[z]$ and $P\in K(C)$.  One prerequisite to generalizing the results in \cite{7auth} to a study of preimages of $P_t$ under $f_t(z)$ is to understand the geometry of the corresponding curves $X^{\mathrm{Pre}}_{f, P}(N)$, which we define as smooth projective models of \[\left\{f_t^N(z)=P_t\right\}\subseteq \AA^1\times C.\]  At the very least, one would hope to show that, generally, the number of components of $X^{\mathrm{Pre}}_{f, P}(N)$ does not increase without bound as $N\to\infty$.
 Since the goal is a geometric result, we replace the number field by an algebraic closure.
\begin{corollary}\label{cor:preim curve}
Let $F=k(X)$ be the function field of some curve over an algebraically closed field of characteristic 0, let $f(z)\in F[z]$, and $P\in F$.  If there exists a place of good reduction for $f$ at which $P$ is not in the filled Julia set, then the number of components of the preimage curve $X^{\mathrm{Pre}}_{f, P}(N)$ is eventually constant (as $N\to\infty$).
\end{corollary}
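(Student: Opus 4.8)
The plan is to reinterpret the number of components of $X^{\mathrm{Pre}}_{f,P}(N)$ as the number $r_N$ of orbits of the arboreal image $G=\im\rho_{f,P}$ on the $N$th level of the tree $T$, to bound $r_N$ by working at a place of good reduction, where Theorem~\ref{th:main} turns the count into a question in multiplicative Kummer theory over a Laurent series field, and finally to observe that $r_N$ is non-decreasing. Write $F=k(X)$, let $v$ be a place of good reduction at which $P$ lies outside the filled Julia set, and let $K=F_v\cong k((t))$ be the completion; its residue field is $k$, of characteristic $0$, so the residual characteristic does not divide $d=\deg f$ and Theorem~\ref{th:main} applies over $K$. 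Since $k$ is algebraically closed, each irreducible factor of $f^N(z)-P$ over $F$ defines a geometrically irreducible curve; and a short argument — using good reduction to locate the critical points of $f$ inside the (forward-invariant) filled Julia set, so that no iterated preimage of $P$ is ever critical for any iterate — shows $f^N(z)-P$ is separable. Hence the number of components of $X^{\mathrm{Pre}}_{f,P}(N)$ is exactly the number of irreducible factors of $f^N(z)-P$ over $F$, which is $r_N$.

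Next I would pass from $F$ to $K$. Choosing a place $w$ of $\overline{F}$ above $v$ identifies $\Gal(\overline{K}/K)$ with the decomposition group $D_w\subseteq\Gal(\overline{F}/F)$ and identifies $\rho_{f,P}|_{D_w}$ with the arboreal representation of $f$ and $P$ over $K$; since $\rho_{f,P}(D_w)\subseteq G$, the integer $r_N$ is at most the number of $\Gal(\overline{K}/K)$-orbits on the $d^N$ solutions of $f^N(z)=P$ in $\overline{K}$. By Theorem~\ref{th:main}, with good reduction $\basin$ is the complement of the filled Julia set — here $\{\,|z|>1\,\}\cup\{\infty\}$ — which contains $P$ and all of its iterated preimages, and $\Omega$ restricts to a biholomorphism of $\basin$ conjugating $f$ to $z\mapsto z^d$ and commuting with $\Gal(\overline{K}/K)$. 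Thus $\Omega$ carries the $f^N$-preimages of $P$ bijectively and Galois-equivariantly onto the $d^N$th roots of $q:=\Omega(P)\in K$, so the orbit count over $K$ equals the number of irreducible factors of $z^{d^N}-q$ over $K$. Since $P$ lies outside the filled Julia set, $|P|>1$ and $v(q)=v(P)$ is a negative integer; and since $K=k((t))$ with $k$ algebraically closed of characteristic $0$ contains all roots of unity and all $n$th roots of units, a direct computation shows that $z^{d^N}-q$ has exactly $\gcd(v(q),d^N)$ irreducible factors over $K$. Therefore $r_N\le\gcd(v(q),d^N)$, which is bounded above (and in fact stabilizes as $N\to\infty$ to the $d$-primary part of $v(q)$).

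To finish, note that the parent map from level $N+1$ to level $N$ of $T$ is a surjective $G$-equivariant map, hence induces a surjection of orbit sets, so $r_N\le r_{N+1}$; a bounded non-decreasing sequence of positive integers is eventually constant, which is the assertion of Corollary~\ref{cor:preim curve}. I expect the point requiring the most care to be the reduction in the second paragraph — confirming that every iterated $f^N$-preimage of $P$ lies inside the domain $\basin$ on which $\Omega$ is defined and invertible (and, conversely, that every $d^N$th root of $q$ lies in $\Omega(\basin)$), so that counting preimages of $P$ truly becomes counting $d^N$th roots of $q$, together with the bookkeeping that the restriction of the arboreal representation over $F$ to a decomposition group is the arboreal representation over $K$. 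Given those, the Kummer computation over $k((t))$ and the monotonicity of $r_N$ are routine.
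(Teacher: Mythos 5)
Your proof is correct, and it reaches the conclusion by a mildly different route than the paper. Both arguments localize at the good place $v$ and invoke Theorem~\ref{th:main} there, but the paper routes the count through Corollary~\ref{cor:big galois}: the ramification of $\Omega(P)$ forces every chain of preimages to satisfy $[F(P_n):F]\geq \epsilon d^n$, so each Galois orbit at level $N$ has size at least $\epsilon d^N$ and hence there are at most $1/\epsilon$ orbits, each corresponding to a component. You instead bound the number of orbits from above directly: since local orbits refine global ones, the component count is at most the number of $\Gal(\overline{K}/K)$-orbits with $K=k((t))$, which $\Omega$ identifies with the number of irreducible factors of $z^{d^N}-q$, $q=\Omega(P)$, namely $\gcd(v(q),d^N)$, bounded by the $d$-part of $|v(q)|$. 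The two counts are really the same ramification phenomenon viewed from opposite sides, but your version is more precise (an exact local orbit count rather than an $\epsilon$), is self-contained rather than citing Corollary~\ref{cor:big galois}, and — importantly — you make explicit the monotonicity of the orbit counts under the parent map, which is what upgrades ``bounded'' to ``eventually constant''; the paper leaves that step implicit. Your extra care about geometric irreducibility and separability (critical points lie in the closed unit disk, hence in the filled Julia set, so $f^N(z)-P$ is separable) is sound, though the orbit--component correspondence only needs distinct roots, so it is not strictly necessary. One trivial slip: since $|P|>1$ and $|\Omega(P)|=|P|^{-1}$, you have $v(q)=-v(P)>0$, not negative; this does not affect the argument, as only $v(q)\neq 0$ and $\gcd(|v(q)|,d^N)$ matter.
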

Pushing this slightly further, with a stronger hypothesis, one can show that if there are at least five distinct places of good reduction for $f$ at which $P$ has a pole of order prime to $d$ (and consequently is not in the filled Julia set), then $X^{\mathrm{Pre}}_{f, P}(1)$ is an irreducible curve of general type, and hence admits only finitely many rational points over any number field.  As a corollary, there will be at most finitely many $t\in C(K)$ such that $P_t$ has a $K$-rational preimage under $f_t(z)$, and a uniform bound then follows from a simple height argument.

\section{Proof of Theorem~\ref{th:main}}

Let $K$ be a  non-archimedean field as in the statement of Theorem~\ref{th:main}.  First we construct a formal series $\Omega(z^{-1})\in K\llbracket z^{-1}\rrbracket$ with the appropriate properties.  Note that this works for an arbitrary field; we do not use the valuation on $K$ here.
\begin{lemma}
Let $f(z)=z^d+\cdots+a_0\in K[z]$, and let $R=K\llbracket z^{-1}\rrbracket$, with maximal ideal $\mf=z^{-1}R$.  Then there exists a series $\Omega\in \mf\setminus\mf^2$ such that
\[\Omega=\lim_{N\to\infty}\left(f^N(z)\right)^{-1/d^N}\]
in the $\mf$-adic topology, where roots are chosen such that \[(f^N(z))^{-1/d^N}=z^{-1}+O(z^{-2}).\]
\end{lemma}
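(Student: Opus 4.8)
The plan is to pass to the coordinate $w=z^{-1}$, work in the complete local ring $R=K\llbracket w\rrbracket$ with $\mf=wR$, and exhibit the sequence $\psi_N:=(f^N(z))^{-1/d^N}$ as an explicit Cauchy sequence. Write $f(z)=z^d+a_{d-1}z^{d-1}+\cdots+a_0$. An immediate induction shows $f^N(z)=z^{d^N}U_N$ for a unit $U_N\in 1+\mf$: indeed $f^{N+1}(z)=f\bigl(f^N(z)\bigr)=\bigl(f^N(z)\bigr)^d\bigl(1+a_{d-1}(f^N(z))^{-1}+\cdots+a_0(f^N(z))^{-d}\bigr)$, and $(f^N(z))^{-1}=w^{d^N}U_N^{-1}\in\mf^{d^N}$. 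We give $(f^N(z))^{-1/d^N}$ meaning as $w\cdot U_N^{-1/d^N}$, where for $u\in 1+\mf$ and $m\in\ZZ$ invertible in $K$ we let $u^{1/m}$ denote the unique element of $1+\mf$ with $m$th power $u$ — existence and uniqueness come from Hensel's lemma applied to $T^m-u$ (whose reduction has $T=1$ as a simple root, since $m$ is a unit), or from the binomial series when the residual characteristic is $0$; invertibility of $d$, hence of each $d^N$, in $K$ is forced by the hypotheses of Theorem~\ref{th:main}. The normalization $(f^N(z))^{-1/d^N}=z^{-1}+O(z^{-2})$ is then automatic. One further records the elementary estimate
\[u\in 1+\mf^k\ \Longrightarrow\ u^{1/m}\in 1+\mf^k,\]
proved by induction: writing $u^{1/m}=1+y$ and expanding $(1+y)^m=u$ gives $my=(u-1)-\bigl(\tbinom{m}{2}y^2+\cdots\bigr)$, and if $\ord(y)\geq j$ then the right side has order $\geq\min(k,2j)\geq\min(k,j+1)$, so $\ord(y)\geq\min(k,j+1)$; iterate.

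The heart of the argument is to compare consecutive terms. From the recursion above, $f^{N+1}(z)=(f^N(z))^d(1+v_N)$ with $v_N:=a_{d-1}(f^N(z))^{-1}+\cdots+a_0(f^N(z))^{-d}\in\mf^{d^N}$. Using uniqueness of roots to split the $(d^{N+1})$th root of the reciprocal — in particular $\bigl((f^N(z))^d\bigr)^{-1/d^{N+1}}=(f^N(z))^{-1/d^N}$ — one gets
\[\psi_{N+1}=\psi_N\cdot(1+v_N)^{-1/d^{N+1}}.\]
By the estimate above $(1+v_N)^{-1/d^{N+1}}\in 1+\mf^{d^N}$, so $\psi_{N+1}-\psi_N=\psi_N\bigl((1+v_N)^{-1/d^{N+1}}-1\bigr)\in\mf\cdot\mf^{d^N}=\mf^{d^N+1}$. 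Since $d\geq 2$ (the statement being vacuous otherwise), $d^N+1\to\infty$, so $(\psi_N)_N$ is $\mf$-adically Cauchy; as $R$ is $\mf$-adically complete, $\psi_N\to\Omega$ for some $\Omega\in R$, which is the asserted limit. Moreover $\psi_1=w(1+a_{d-1}w+\cdots)^{-1/d}\in w+\mf^2$ and every subsequent difference lies in $\mf^{d^N+1}\subseteq\mf^2$, so $\Omega\in w+\mf^2\subseteq\mf\setminus\mf^2$, as required.

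The step I expect to be the crux is this comparison: everything turns on the fact that $v_N$ lies in the rapidly shrinking ideal $\mf^{d^N}$ (because $f^N(z)$ grows like $z^{d^N}$) and that extracting $(d^{N+1})$th roots does not spoil this — without the clean root estimate, the telescoping would not obviously converge. The remaining ingredients are routine manipulations of $m$th roots in a complete local ring. (As a remark not needed for the lemma: running the same computation with the other factorization $f^{N+1}=f^N\circ f$ gives $\psi_N(f(z))=\psi_{N+1}(z)^d$, and passing to the limit produces the functional equation $\Omega(f(z))=\Omega(z)^d$ underlying Theorem~\ref{th:main}.)
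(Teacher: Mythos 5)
Your proof is correct, and its skeleton matches the paper's: both realize the roots via Hensel's lemma in the complete local ring $K\llbracket z^{-1}\rrbracket$ applied to the unit $f^N(z)/z^{d^N}\in 1+\mf$, show the resulting sequence is $\mf$-adically Cauchy with differences of size roughly $\mf^{d^N}$, and invoke completeness. Where you genuinely diverge is in how the Cauchy estimate is obtained. The paper first passes to $\overline{K}\llbracket z^{-1}\rrbracket$ so that all $d^{N+1}$st roots of unity are available, and then uses the factorization $\prod_{\zeta^{d^{N+1}}=1}(\xi_N-\zeta\xi_{N+1})=\xi_N^{d^{N+1}}-\xi_{N+1}^{d^{N+1}}$ (each factor with $\zeta\neq 1$ being a unit, since $\xi_N\equiv\xi_{N+1}\equiv 1\MOD{\mf}$) to transfer the easy bound on $\beta_N^d-\beta_{N+1}$ back to the roots themselves. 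You instead stay entirely inside $K\llbracket z^{-1}\rrbracket$, prove the congruence-preservation lemma $u\in 1+\mf^k\Rightarrow u^{1/m}\in 1+\mf^k$ (for $m$ a unit) by a direct Newton-type induction, and exploit the exact multiplicative relation $\psi_{N+1}=\psi_N(1+v_N)^{-1/d^{N+1}}$ coming from $f^{N+1}=(f^N)^d(1+v_N)$ with $v_N\in\mf^{d^N}$. The two routes give the same rate of convergence; yours buys a slightly more elementary and self-contained argument (no base change to $\overline{K}$, no roots-of-unity trick), makes the telescoping structure explicit, and, as you observe, hands you the functional equation $\Omega\circ f=\Omega^d$ essentially for free, while the paper's norm computation is shorter once one is willing to adjoin the roots of unity. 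You are also right, and slightly more careful than the paper's remark that the lemma ``works for an arbitrary field,'' to flag that invertibility of $d$ in $K$ is what makes Hensel's lemma (and uniqueness of roots in $1+\mf$) available; both proofs need this, and it is guaranteed by the hypotheses of Theorem~\ref{th:main}. One quibble: for $d=1$ the statement is not vacuous but simply fails in general (e.g.\ $f(z)=z+1$), so the correct reading is that $d\geq 2$ is a standing assumption of the dynamical setting rather than a case your argument silently covers.
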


\begin{proof}
Let
$\beta_N=f^N(z)/z^{d^N}$,
so that $\beta_N=1+O(z^{-1})\in R^*$.  Since $1^{d^N}=1$, and since $R$ is Henselian, there exists a $\xi_N=1+O(z^{-1})\in R^*$ such that $\xi_N^{d^N}=\beta^N$.  First, we claim that the sequence $\{\xi_N\}_{N\geq 0}$ converges $\mf$-adically.  After possibly tensoring with $\overline{K}$, we will assume for simplicity that $K$ contains all $d^N$th roots of unity.  Now, since $\xi_N\equiv \xi_{N+1}\equiv 1\MOD{\mf}$, we have
\begin{eqnarray*}
\left|\xi_N-\xi_{N+1}\right|_\mf&=&\prod_{\zeta^{d^{N+1}}=1}\left|\xi_N-\zeta\xi_{N+1}\right|_\mf\\
&=&\left|\xi_N^{d^{N+1}}-\xi_{N+1}^{d^{N+1}}\right|_\mf\\
&=&\left|\beta_N^d-\beta_{N+1}\right|_\mf\\
&=&|z|_\mf^{-d^{N+1}}\left|(f^N(z))^d-f^{N+1}(z)\right|_\mf\\
&=&|z|_\mf^{-d^{N+1}}\left|a_{d-1}(f^N(z))^{d-1}+\cdots+a_0\right|_\mf\\
&=&|z|_\mf^{-d^{N+1}}|z|_\mf^{(d-1)d^{N}}=e^{-d^N}.
\end{eqnarray*}
A simple telescoping sum argument now shows that the sequence $\xi_N\in R$ is $\mf$-adically Cauchy, and so has a limit $\Xi=1+O(z^{-1})\in R^*$.  Now, note that 
\[\left(z^{-1}\xi_N^{-1}\right)^{-d^N}=f^N(z),\]
so that $z^{-1}\xi_N^{-1}=z^{-1}+O(z^{-2})$ is our specified choice of $d^N$th root, 
and
\[z^{-1}\xi_N^{-1}\to z^{-1}\Xi^{-1};\]
we call the latter series $\Omega$.  From the construction it is clear that \[\Omega=z^{-1}+O(z^{-2})\in\mf\setminus\mf^2,\] and that
\[\Omega=\lim_{N\to\infty}\left(f^N(z)\right)^{-1/d^N},\]
where the $d^N$th root is chosen with linear coefficient 1.
\end{proof}

Note that the series $\Omega$ automatically satisfies the functional equation 
\[\Omega\circ f(z)=\Omega^d.\]  It is also worth noting that, since $\Omega=z^{-1}+O(z^{-2})$, there is a formal power series $\Omega^{-1}=z^{-1}+O(z^{-2})$ such that \[\Omega\circ\Omega^{-1}=\Omega^{-1}\circ\Omega=z,\]
defined by the Lagrange inversion formula.   It is not yet clear that these series have positive radius of convergence.  In order to prove this, we will first give two characterizations of the quantity which turns out to be the radius of convergence.

\begin{lemma}
Let $f(z)=z^d+a_{d-1}z^{d-1}+\cdots+a_0\in K[z]$, and let \[C_f=\max_{0\leq i<d}\left\{1, |a_i|^{1/(d-i)}\right\}.\]  Then 
\[C_f^{-1}=\sup\left\{0<\delta\leq 1:|f(z)|=|z|^d\text{ for all }z\in D(\infty; \delta)\right\},\]
where $D(\infty; \delta)=\{z\in\overline{K}:|z|>\delta^{-1}\}$.
\end{lemma}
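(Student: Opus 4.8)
The plan is to prove the identity in two halves: first that $\delta=C_f^{-1}$ belongs to the set whose supremum is being computed, and then that no $\delta>C_f^{-1}$ does. The first half is routine: if $|z|>C_f$ then $|z|>|a_i|^{1/(d-i)}$, hence $|a_iz^i|=|a_i|\,|z|^i<|z|^{d-i}|z|^i=|z|^d$, for every $i<d$ with $a_i\ne0$; the leading term $z^d$ therefore strictly dominates all lower-order terms, and the ultrametric inequality gives $|f(z)|=|z|^d$. Since $C_f\ge1$ we have $C_f^{-1}\le1$, so $\delta=C_f^{-1}$ does lie in the set, giving ``$\ge$''.

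The content of the lemma is the second half, namely the sharpness of $C_f^{-1}$. If $C_f=1$ there is nothing to prove, since the supremum is taken over $\delta\le1$; so assume $C_f>1$, in which case the maximum defining $C_f$ is attained at some index $i_0<d$ with $|a_{i_0}|^{1/(d-i_0)}=C_f$. It suffices to produce a single point $z_0\in\overline K$ with $|z_0|=C_f$ and $|f(z_0)|<|z_0|^d$: then, for any $\delta>C_f^{-1}$, the point $z_0$ lies in $D(\infty;\delta)$ yet violates $|f(z)|=|z|^d$, so $\delta$ is not in the set. To build $z_0$, I would choose $\pi\in\overline K$ with $|\pi|^{d-i_0}=|a_{i_0}|$, so that $|\pi|=C_f$ (possible because the value group of $\overline K$ is divisible), and substitute $z=\pi w$ to obtain $f(\pi w)=\pi^d g(w)$, where $g(w)=\sum_{i=0}^{d}(a_i\pi^{i-d})w^i$ with $a_d=1$. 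One checks that $g$ has coefficients in the ring of integers $\Ocal$ of $\overline K$, with $i$-th coefficient a unit precisely when $i$ lies in the ``dominant set'' $I=\{i:|a_i|C_f^i=C_f^d\}$; this $I$ contains $d$, and, because $C_f>1$, it also contains $i_0<d$. Hence the reduction $\bar g\in k[w]$ of $g$ modulo the maximal ideal has degree $d$ (leading coefficient $1$) and lowest term $w^{\min I}$ with $\min I<d$; dividing out $w^{\min I}$ leaves a polynomial of positive degree with nonzero constant term, which, $k$ being algebraically closed, has a root $\bar w_0\ne0$. Lifting $\bar w_0$ to $w_0\in\Ocal^*$ and putting $z_0=\pi w_0$, the reduction of $g(w_0)\in\Ocal$ is $\bar g(\bar w_0)=0$, so $|g(w_0)|<1$, and therefore $|f(z_0)|=C_f^d\,|g(w_0)|<C_f^d=|z_0|^d$, as wanted.

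I expect the main obstacle to be precisely this construction of $z_0$ when $C_f>1$: one must choose the scaling factor $\pi$ correctly and, crucially, confirm that the reduced polynomial $\bar g$ has a \emph{unit} root --- a root of absolute value $1$ --- rather than merely a root. That is exactly what $\min I<d$ guarantees, and it is the one place where the hypothesis $C_f>1$ (i.e.\ that $|f(z)|=|z|^d$ does fail somewhere on $\overline K\setminus\{0\}$) genuinely enters. The remaining technicalities --- that $C_f$ lies in the value group of $\overline K$, and that the residue field $k$ of $\overline K$ is algebraically closed --- are mild; the latter can be sidestepped by carrying out the reduction step over a suitable finite extension of $K$ containing $\pi$ and $w_0$ instead.
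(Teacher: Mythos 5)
Your proof is correct, and its overall architecture matches the paper's: the easy inclusion via the ultrametric inequality, then sharpness by exhibiting a point of absolute value exactly $C_f$ at which $|f(z)|=|z|^d$ fails, handling $C_f=1$ trivially. The only real divergence is in how that witness point is produced. The paper quotes Newton polygon theory: since the Newton polygon of $f(z)/z^d$, viewed as a polynomial in $z^{-1}$, has a segment of the appropriate slope, $f$ has an actual root $z$ with $|z|=C_f$, where of course $|f(z)|=0\neq|z|^d$. You instead rescale by $\pi$ with $|\pi|=C_f$, pass to the reduction $\bar g$ over the residue field of $\overline K$, and use the fact that $\min I<d$ to find a unit residue $\bar w_0$ killing $\bar g$; lifting arbitrarily gives a point $z_0=\pi w_0$ with $|z_0|=C_f$ and $|f(z_0)|<|z_0|^d$. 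This is essentially a self-contained proof of the Newton-polygon input the paper cites (and it is slightly weaker but sufficient: you do not need an exact root, only a strict drop in $|f|$, so no Hensel-type lifting is required). The trade-off is the usual one: the paper's version is shorter by appealing to a standard tool, while yours is more elementary and makes explicit exactly where $C_f>1$ is used (namely, that the dominant set $I$ meets $\{0,\dots,d-1\}$, forcing a nonzero unit root of $\bar g$). One small caveat: your closing remark about replacing $\overline K$ by a finite extension containing ``$\pi$ and $w_0$'' is circular as phrased, since $w_0$ is what is being constructed; but this is moot, as the residue field of $\overline K$ is indeed algebraically closed and your main argument stands as written.
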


\begin{proof}
It follows from the ultrametric inequality that $|f(z)|=|z|^d$ for all $z\in D(\infty; C_f^{-1})$,
since $|z^d|>|a_iz^i|$ for $|z|>C_f$ and $0\leq i<d$.
So if $B$ is the supremum defined above, we clearly have $C_f^{-1}\leq B\leq 1$.   If $C_f=1$, then there is nothing to show, so suppose that $C_f^{-1}<1$, implying $|a_i|>1$ for some $i$.  It suffices to show that there exists a $z$ with $|z|=C_{f}^{-1}$, but $|f(z)|\neq |z|^d$. If $i$ is the least index maximizing $|a_i|^{1/(d-i)}$, then the Newton polygon of $f(z)/z^d$ (as a polynomial in $z^{-1}$) contains a line segment joining $(0, 0)$ to $(d-i, v(a_i))$.  Necessarily this polynomial has a root of absolute value $|a_i|^{1/(d-i)}=C_f^{-1}$.  In other words, there exists a $z$ with $|z|=C_f\neq 0$ and $|f(z)|=0$.  It follows at once that $B\leq C_f^{-1}$.
\end{proof}

Our main interest in the more complicated description of $C_f$, beyond the fact that it seems somewhat more fundamental than the simpler definition, is that it immediately implies that $C_{f^N}\leq C_f$ for all $N$, a fact which is somewhat awkward to prove directly.

% Given a disk $D$ in $\PP^1(K)$, we will let $T_D$ denote the Tate algebra of $D$ (that is, the ring of power series, in some uniformizing variable, convergent on $D$), and $F_D$ its fraction field.  One can check that if $D=\{t\in\PP^1(K):|u|<\epsilon\}$, for some uniformizing element $u$, then $T_D$ is simply the ring of all power series $\sum_{n=0}^\infty a_nu^n$ satisfying $|a_n|\epsilon^n\to 0$, while $F_D$ is the field of all Laurent series $\sum_{n=-m}^\infty a_n u^n$ satisfying the same.

%(To see this, note that any Laurent series satisfying $|a_n|\epsilon^n\to 0$ is in there, since we can multiply by a power of $u$ to get the element in $T_D$, and this doesn't change the limit (it multiplies it by a fixed power of $\epsilon$).  On the other hand, if $\sum_{n=-m}^\infty a_n u^n$ is in there, then so is $\sum_{n=0}^\infty a_n u^n$, which must therefore be in $T_D^*$, and hence satisfy $|a_n|\epsilon^{n}\to 0$.)

\begin{lemma}
The series $\Omega$  converges pointwise on $D(\infty; C_{f}^{-1})$, and $\Omega^{-1}$ converges pointwise on $D(0; C_f^{-1}$).
\end{lemma}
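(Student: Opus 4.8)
The plan is to bound the coefficients of $\Omega$ as a power series in $z^{-1}$: writing $\Omega=\sum_{n\geq 1}c_nz^{-n}$ (so $c_1=1$), I would show that $|c_n|\leq C_f^{n-1}$ for all $n$. Granting this, the non-archimedean root test gives convergence of $\Omega$ at every $z$ with $|z|>C_f$, that is, on $D(\infty;C_f^{-1})$, since there $|c_nz^{-n}|\leq C_f^{-1}(C_f/|z|)^n\to 0$. Rather than estimate $\Omega$ itself, I would bound the coefficients of the approximants $\eta_N:=z^{-1}\xi_N^{-1}$ from the first lemma \emph{uniformly in $N$}, and then pass to the $\mf$-adic limit $\eta_N\to\Omega$.

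Two inputs drive the estimate. First, the remark recorded just above that $C_{f^N}\leq C_f$ for every $N$: unwinding the elementary description of $C_{f^N}$, this says precisely that each coefficient of $\beta_N-1=f^N(z)/z^{d^N}-1$, regarded as a polynomial in $z^{-1}$, has absolute value at most $C_f$ raised to the power equal to its degree in $z^{-1}$. Second, since $d=\deg f$ is prime to the residual characteristic of $K$, the generalized binomial coefficients $\binom{1/d^N}{k}$ all have absolute value at most $1$. Now the Henselian $d^N$-th root $\xi_N\equiv 1\pmod{\mf}$ of $\beta_N$ is given explicitly by the binomial series $\xi_N=\sum_{k\geq 0}\binom{1/d^N}{k}(\beta_N-1)^k$. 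The coefficient of $z^{-m}$ in $(\beta_N-1)^k$ is a sum of products of coefficients of $\beta_N-1$ whose degrees total $m$, hence of absolute value at most $C_f^m$; so the coefficient of $z^{-m}$ in $\xi_N$, and therefore also in $\xi_N^{-1}=\sum_{k\geq 0}(-1)^k(\xi_N-1)^k$, has absolute value at most $C_f^m$. Consequently the coefficient of $z^{-m}$ in $\eta_N=z^{-1}\xi_N^{-1}$ is bounded by $C_f^{m-1}$, uniformly in $N$; letting $N\to\infty$ (the $\mf$-adic limit being coefficientwise) gives $|c_m|\leq C_f^{m-1}$, as wanted.

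For $\Omega^{-1}=\sum_{n\geq 1}e_nz^{-n}$, the relation $\Omega\circ\Omega^{-1}=z$ yields the standard recursion $e_n=-\sum_{k=2}^{n}c_k\bigl(\text{coefficient of }z^{-n}\text{ in }(\Omega^{-1})^k\bigr)$ for $n\geq 2$, and that coefficient is a sum of products $e_{j_1}\cdots e_{j_k}$ with $j_1+\cdots+j_k=n$ (and each $j_i<n$ because $k\geq 2$). Assuming inductively that $|e_j|\leq C_f^{j-1}$ for $j<n$, each term $c_k\cdot(\cdots)$ has absolute value at most $|c_k|\,C_f^{n-k}\leq C_f^{k-1}C_f^{n-k}=C_f^{n-1}$, whence $|e_n|\leq C_f^{n-1}$; so $\Omega^{-1}$ converges on $D(0;C_f^{-1})$. (More conceptually: after the substitution $z^{-1}\mapsto C_f^{-1}z^{-1}$, harmless over a suitable extension, $\Omega$ acquires integral coefficients, and the compositional inverse of a series $t+O(t^2)$ over any commutative ring again has coefficients in that ring.)

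I expect the main obstacle to be not any deep point but the bookkeeping behind the uniformity in $N$ of these bounds --- in particular the translation of $C_{f^N}\leq C_f$ into the coefficientwise control of $\beta_N$ used above. This is exactly where the Newton-polygon characterization of $C_f$ earns its keep, the inequality $C_{f^N}\leq C_f$ being (as already noted) awkward to establish by hand; and the hypothesis that $d$ is prime to the residual characteristic is indispensable, entering through the integrality of the coefficients $\binom{1/d^N}{k}$.
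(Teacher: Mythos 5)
Your proof is correct and is essentially the paper's argument in unscaled form: the bound $|c_n|\le C_f^{\,n-1}$, obtained from $C_{f^N}\le C_f$ together with integrality of the binomial coefficients $\binom{1/d^N}{k}$ (where $p\nmid d$ enters), is exactly the paper's statement that $\alpha\Omega(\alpha z)$ has integral coefficients for $|\alpha|=C_f$, proved there via the same two inputs after rescaling the variable. Your inductive recursion for the coefficients of $\Omega^{-1}$ likewise just makes explicit the integrality that the paper extracts from the Lagrange inversion formula, as your own parenthetical remark notes.
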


\begin{proof}
In the case where $C_f=1$, this is not particularly difficult.  In particular, if $\mathcal{O}\subseteq K$ is the ring of integral elements, then the condition $C_f=1$ implies $\beta_N=f^N(z)/z^{d^N}\in \mathcal{O}[z^{-1}]$ for all $N$.  It follows that $\xi_N(z)=1+O(z^{-1})\in \mathcal{O}\llbracket z^{-1}\rrbracket$ (which is where we use the hypothesis that $\deg(f)$ is a unit in $K$), and so $\Omega\in \mathcal{O}\llbracket z^{-1}\rrbracket$.  But elements of $\mathcal{O}\llbracket z^{-1}\rrbracket$ are convergent on $D(\infty; 1)$.  It also follows from this, and the fact that $\Omega(z)=z^{-1}+O(z^{-2})$, that  we have $|\Omega(z)|=|z|^{-1}$ for all $z\in D(\infty; 1)$.

In general, note that for any $\alpha\in\overline{K}$, the coefficient of $z^{i-d}$ in $f(\alpha z)/(\alpha z)^d$ is precisely $a_{i}\alpha^{i-d}$, which has modulus at most 1 if $|\alpha|= C_f$.  Since the same is true for $f^N(\alpha z)/(\alpha z)^{d^N}$, because $C_f\geq C_{f^N}$, we see that $\xi_N(\alpha z)\in \overline{K}\llbracket z^{-1}\rrbracket$ has integral coefficients, and thus so too does \[z^{-1}\xi_N(\alpha z)^{-1}=\alpha\Omega(\alpha z)=z^{-1}+a_2\alpha z^{-2}+\cdots\] (here $w=z^{-1}$).  An examination of the Lagrange inversion formula shows that if a power series with linear coefficient 1 has coefficients in a given ring, then so does its inverse, and since the inverse of $\alpha\Omega((\alpha^{-1} z)^{-1})$ is $\alpha\Omega^{-1}((\alpha^{-1} z)^{-1})$, the latter series also has integral coefficients.  It follows that both of these series converge for $|w|<1$, and so the series converge for $|w|<|\alpha|^{-1}=C_f^{-1}$.
\end{proof}

It follows from the above that $\Omega(z)$ is an element of the Tate algebra of $D=D(\infty; \epsilon)$, for any $\epsilon<C_f^{-1}$, but we have not actually shown that $\Omega$ is the limit of $(f^N(z))^{-1/d^N}$ in the corresponding norm $\|\cdot\|_D$ (i.e., the uniform norm corresponding to this disk).  If $\CC_v\supseteq K$ is any complete algebraically closed field, let $\epsilon<\epsilon'<C_f^{-1}$, and let $\|\cdot\|_{D'}$ be the supremum norm on the Tate algebra of $D'=D(\infty; \epsilon')$.  Since these norms are multiplicative, we have for any $g\in T_{D'}$ and $z\in D(\infty; \epsilon)$,
\begin{eqnarray*}
|g(z)|&=&\left|z^{\ord_\infty(g)}\right|\cdot\left|z^{-\ord_\infty(g)}g(z)\right|\\
&\leq& \left|z^{\ord_\infty(g)}\right|\cdot\left\|z^{-\ord_\infty(g)}g(z)\right\|_{D'}\\
&\leq&\left(\frac{|z|}{\epsilon'}\right)^{\ord_\infty(g)} \left\|g\right\|_{D'},
\end{eqnarray*}
and so
\[\|g\|_{D}\leq \left(\frac{\epsilon}{\epsilon'}\right)^{\ord_\infty(g)} \left\|g\right\|_{D'}\]
In particular, any sequence of functions $g_N\in T_{D'}$ such that $\|g_N\|_{D'}$ is bounded, and $\ord_\infty(g_N)\to \infty$ will converge 
as a sequence of elements in $T_D$.  Since $\epsilon<\epsilon'<C_f^{-1}$ were arbitrary, and since $(f^N(z))^{-1/d^N}\to \Omega$ in the $\mf$-adic topology, we have $(f^N(z))^{-1/d^N}\to \Omega$ in the Tate algebra $T_D$ for any proper subdisk $D\subseteq D(\infty; C_f^{-1})$, after noting that $\|(f^N(z))^{-1/d^N}\|_{D'}=\epsilon$.

We have now constructed the power series $\Omega$ and $\Omega^{-1}$, and shown that they converge on some disks of positive radii about $\infty$ and $0$, respectively.  If $f$ has good reduction, then these radii are both 1.
To finish the proof of Theorem~\ref{th:main}, it suffices to observe that for any convergent series $\Sigma a_i z^i$ with $a_i\in K$, and any Galois extension $L/K$, we have $\left(\Sigma a_i z^i\right)^{\sigma}=\Sigma a_i(z^\sigma)^i$ for all $z\in L$ and $\sigma\in \Gal(L/K)$.  This follows simply because $\sigma$ will fix the (unique) maximal ideal of $\mathcal{O}_L\subseteq L$, and hence will act continuously on $L$.

\section{Proofs of the other results}

With the proof of Theorem~\ref{th:main} complete, the other results become relatively straightforward.

\begin{proof}[Proof of Theorem~\ref{th:finite index}]  Let $f(z)\in K[z]$ be a monic polynomial of degree indivisible by the residual characteristic of $f$, and suppose that $f$ has good reduction.  We further suppose that $P\in K$ is not in the filled Julia set of $f$.  Then by Theorem~\ref{th:main}, there is a biholomorphic Galois-equivarient mapping $\Omega:D(\infty; 1)\to D(0; 1)$ such that $\Omega(f(z))=(\Omega(z))^d$, for all $z$.  Note that if $Q\in\overline{K}$ satisfies $f^N(Q)=P$, then $Q\in D(\infty; 1)$, and so the domain of $\Omega$ contains the entire preimage tree $T_{f, P}$.  Thus, $\Omega$ induces a tree isomorphism $T_{f, P}\cong T_{z^d, \Omega(P)}$ which respects the action of $\Gal(\overline{K}/K)$.  It follows that the image of the arboreal Galois representation $\rho_{f, P}$ is isomorphic to the image of the representation $\rho_{z^d, \Omega(P)}$.  Since $\Omega(P)$ is not a unit in $K$, standard Kummer theory shows that this image is a finite index subgroup of a Kummer subgroup of $\Aut(T_{z^d, \Omega(P)})$.
\end{proof}

\begin{proof}[Proof of Corollary~\ref{cor:big galois}] Suppose that $f(z)\in K[z]$ has good reduction at $\pf$, a place above a rational prime $p\nmid \deg(f)$, and that $P_0$ is not in the $\pf$-adic filled Julia set.  It is clearly the case that, if $f(P_{n+1})=P_n$ for all $n$, we have $[K(P_{n+1}):K(P_n)]\leq d$, and so it suffices to establish a lower bound
\[[K(P_{n+1}):K(P_n)]\geq [K_\pf(P_{n+1}):K_\pf(P_n)]\geq d,\]
for $n$ sufficiently large.
  But note that, for each $n$, $\Omega(P_{n+1})^d=\Omega(P_n)$.  Also, we have that $|\Omega(P_0)|=|P_0|^{-1}\neq 1$, and so $\Omega(P_0)$ is not a unit.  It follows that, for $n$ sufficiently large, the ramification index of the extension
  \[K_\pf(P_{n+1})/K_\pf(P_n) = K_\pf(\Omega(P_{n+1}))/K_\pf(\Omega(P_{n}))\]
is $d$, giving the lower bound we require.
\end{proof}

\begin{proof}[Proof of Corollary~\ref{cor:preim curve}]
Finally we prove the claims about preimage curves.  Let $K=k(C)$ be the function field of a curve $C$ over an algebraically closed field $k$ of characteristic not dividing $d$, and let $f(z)\in K[z]$ have degree $d$, $P\in K$.  Let  $v$ be a place at which $f$ has good reduction, and such that $P$ is not in the filled $v$-adic Julia set, and $K_v$ be the completion of the local field at $v$.  As above, for any chain $P_n\in \overline{K}$ with $P_0=P$, $f(Pa_{n+1})=\alpha_n$, we have $[K(P_n):K]\geq \epsilon d^{n}$, for some absolute $\epsilon>0$.  In particular, the number of Galois orbits in $f^{-N}(P)$ remains bounded as $N\to \infty$.  Each one of these Galois orbits corresponds to a component of $X^\mathrm{Pre}_{f, P}(N)$.

Note, also, that the argument provided in the proof of Corollary~\ref{cor:big galois} above shows that the ramification index of $v$ in the extension $[K(P_1):K]$ is at least $d/\gcd(v(P), d)$.  In particular, we see that when $P$ has a pole of order prime to $d$ at $v$, $K(P_1)/K$ is an extension of degree $d$ in which $v$ is totally ramified.  In other words, $X^{\mathrm{Pre}}_{f, P}(1)$ is an irreducible curve, and the map $X^{\mathrm{Pre}}_{f, P}(1)\to X^{\mathrm{Pre}}_{f, P}(0)=C$ ramifies completely at the place above $v$.  If there are at least 5 such places, then $X^{\mathrm{Pre}}_{f, P}(1)$ admits a map of degree $d$ to another curve, with ramification divisor having degree at least $5d-5$.  It follows that this curve has genus at least 2.
\end{proof}

\end{document}